\newcommand\eq[1]{(\ref{eq:#1})}
\newcounter{remark}[section]
\def\claim{\par\medskip\noindent\refstepcounter{remark}\hbox{\bf Remark \arabic{section}.\arabic{remark}}
	\ 
}
\def\endclaim{
	\par\medskip}
\newenvironment{remark}{\claim}{\endclaim}
\newtheorem{theorem}{Theorem}
\newcommand{\ti}{\mathbf{t}}
\newcommand{\SSS}{\mathbf{S}}
\newtheorem{definition}{Definition}
\newtheorem{corollary}{Corollary}
\def\endpf{{\ \hfill\hbox{\vrule width1.0ex height1.0ex}\parfillskip 0pt
	}}
	\newenvironment{proof}{\noindent{\bf Proof:}}{\endpf}
\begin{document}
		
\title{Asymptotic Independence of Regenerative Processes with dependent cycles}
\author{Royi Jacobovic\thanks{Department of Statistics; The Hebrew University of Jerusalem; Jerusalem 9190501; Israel.
{\tt royi.jacobovic@mail.huji.ac.il/offer.kella@gmail.com}}\and Offer Kella\footnotemark[1] \thanks{Supported by grant No. 1647/17 from the Israel Science Foundation and
the Vigevani Chair in Statistics.}}

\date{November 21, 2017}
		\maketitle
		\begin{abstract}
 We identify general conditions under which regenerative processes with dependent cycles and cycle lengths are asymptotically independent. The result is applied to various models. In particular, independent L\'evy processes with dependent secondary jumps at the origin (e.g., workloads of parallel M/G/1 queues with server vacations), the asymptotic performance of real-time status systems with multiple correlated sources measured by the stationary probability of an updated system and asymptotic results for clearing processes with dependent arrivals of inputs and clearings.
		\end{abstract}

\bigskip
\noindent {\bf Keywords:} Regenerative processes, dependent cycles, product form.

\bigskip
\noindent {\bf AMS Subject Classification (MSC2010):} Primary 60K05; Secondary 	60K25, 60G51, 	90B15.

		\section{Introduction} We start by describing four models in order to motivate the need for the main result of this paper. All four models have a common structure. Namely, they are all dependent regenerative processes which are constructed from an i.i.d. sequence of dependent cycles and cycle lengths. That is, the structure considered more formally in Section~\ref{sec:main-result}, where we will give the main result (Theorem~\ref{thm:main result}) and its proof. In Section~\ref{sec:applications} we will see how to apply it to each of the four models considered below. For background on (standard) regenerative processes see, e.g., Chapters VI and VII of \cite{a03}.
		
		\subsection{L\'evy driven queues with secondary jump inputs}\label{sec:queues}
Consider $m\ge 2$ independent L\'evy processes (right continuous processes having stationary and independent increments) with no negative jumps, each starting from zero, denoted by $X_1(\cdot),\ldots,X_m(\cdot)$  satisfying $EX^i(1)<0$. E.g., the net input processes in an M/G/1 queue with traffic intensity less than one. For the $i$th process we let $\{U^i_n|\,n\ge 0\}$ be a sequence of positive random variables. The $i$th queue starts from $U^i_1$. When it first approaches zero in jumps to $U^i_2$ and continues until it approaches zero again, then jumps to $U^i_3$ and so on. Formally we can define $S_0=0$ and for $n\ge 1$
\begin{align}
S_n^i&=\inf\left\{t\left|\,X^i(t)+\sum_{j=1}^nU^i_j=0\right.\right\}\,,\nonumber\\
N^i(t)&=\sup\{n|\,S_n^i\le t\}\,,\\
X_i(t)&=X^i(t)+\sum_{i=1}^{N^i(t)+1}U^i_j\,.\nonumber
\end{align}
This type of model has been considered in the literature (e.g., \cite{cw93,kw90,kw91,kw92}) some of which was motivated by workload in a queue with server vacations. Assume now that $\{(U^1_n,\ldots,U^m_n)|\,n\ge 1\}$ is a sequence of i.i.d. random vectors, but with $U^1_1,\ldots,U^m_1$ being possibly dependent. In this case the content processes $X_1(t),\ldots,X_m(t)$ are possibly dependent for each $t\ge 0$. Does the joint content process have a limiting distribution? If yes, then under what conditions and what it is? Obviously, if $U^1_1,\ldots,U^m_1$ are independent then the content processes are independent and the answer reduces to the one dimensional case which can be found, for example, in \cite{kw91,kw92}.

\subsection{L\'evy driven clearing processes}\label{sec:clearing}
Consider $m\ge 2$ independent subordinators (nondecreasing L\'evy processes) denoted by $J_1(\cdot),\ldots,J_m(\cdot)$ (starting from zero).
For each $i$ let $\{S^i_n|\,n\ge 0\}$ be a strictly increasing sequence of random variables with $S^i_0=0$. These will be the {\em clearing times}. The $i$th process then behaves like a subordinator with the exception of the times $S^i_n$ where it is restarted from zero. Formally, if we let $N^i(t)=\sup\{n|\ S_n^i\le t\}$, then the $i$th process is the amount of content that accumulated since the most recent clearing, that is,
\begin{equation}
X_i(t)=J_i\left(t-S^i_{N^i(t)}\right)\ .
\end{equation}
Now, for $n\ge 1$ denote $T_n^i=S_n^i-S_n^{i-1}$ and assume that $$\{(T^1_n,\ldots,T^m_n)|\,n\ge 1\}$$ are i.i.d. random vectors with $T^1_1,\ldots,T^m_1$ being possibly dependent. Then precisely the same questions that were asked in Subsection~\ref{sec:queues} may be repeated here with respect to the content processes considered in this subsection. Clearing processes have been extensively studied (L\'evy driven or otherwise). See, e.g., \cite{w81,s74,s77,ss78,k98,ks15}.

\subsection{Real-time status updating}\label{sec:updating}
		This model is about, so called, real-time status updating. The background for this model is the fact that nowadays we witness an extensive use of variety of portable electronic devices. At any time, these devices update their users about the most recent information including social media, weather conditions, traffic congestions, prices of financial assets, etc. The point is that due to limited resources of the communication network, these status updates have to be loaded into the system with respect to some updating policy in order to maintain some adequate level of information freshness. As mentioned by \cite{Huang2015, yates2012}, one class of real-time status updating problems is concerned with the case where updates are generated by multiple sources. Here, the generation processes of updates by $m\ge 2$ different sources are assumed to be a $m$-dimensional renewal process with inter-arrival times which are distributed like $(T^1,\ldots,T^m)$.

Now, for each $1\le 1\le m$ and $n\geq1$ let $T_n^i$ be the time between the appearances of the $(n-1)$st and $n$th updates which were generated by the $i$th source. In addition, the sizes (in bytes) of updates generated by the $j$th source  $\{Y_j^i;j\geq 1\}$ are i.i.d positive random variables. It is assumed that $$(T_n^1,\ldots,T_n^m),Y^i_j;\,n\ge 0,\,j\ge 1,\,1\le i\le m$$ are all independent.

Moreover, for each source $i$ there exists a private channel with capacity $c_i$ bytes per second which operates according to a last-in-first-out policy with no waiting room. That is, if the channelling of an update is interrupted by a new update, then the old update is discarded from the system with no retrieval option. Notice that an update has been received by the system at a certain time if only if all of its content has already gone through the channel into the system until that time. Moreover, we say that the system is updated  at a certain time if and only if for every source, the most recent update has already been received by the system. The performance measure of this system is defined as the limiting probability that the system is updated. Of course, we need to identify conditions under which such a limit distribution exists and possibly compute it.

\subsection{Jackson networks}\label{subsec:Jackson}
Assume that $X_i(t)$ is the number of customers in the $i$th station in an open Jackson network with $m\ge 2$ stations. It is well known that under some standard conditions, the joint limiting distribution of $(X_1(t),\ldots,X_m(t))$ is of product form. This may be found in virtually any textbook on queueing theory. In this case, for positive $\alpha_i\not=\alpha_j$ for $i\not=j$ and arbitrary $\beta_1,\ldots,\beta_m$ does $(X_1(\alpha_1 t+\beta_1),\ldots,X_m(\alpha_m t+\beta_m))$ also have a limiting distribution? If yes, then is it also of product form?

		\section{The main result}\label{sec:main-result}
		For $m\ge 2$ denote $M_m=\{1,\ldots,m\}$ and let
\begin{equation}\label{eq:1n}
\{(X_n^1(\cdot),T_n^1),\ldots,(X_n^m(\cdot),T_n^m)|\,n\ge 1\}
\end{equation}
be a sequence of i.i.d. random elements distributed like and independent of
\begin{equation}\label{eq:10}
\{(X^1(\cdot),T^1),\ldots,(X^m(\cdot),T^m)|\,n\ge 1\}\,,
\end{equation}
the latter having an arbitrary joint distribution.
For each $i\in M_m$ we assume that
		\begin{description}
			\item{(i)} $T^i$ is almost surely (a.s.) nonnegative, has a nonarithmetic distribution with positive and finite mean denoted by $\mu_i$.
			\item{(ii)} $X^i(\cdot)$ takes values in some metric space and a.s. has right continuous sample paths.
		\end{description}
For each $i\in M_m$ denote $S_0^i=0$, $S_n^i=\sum_{k=1}^n T^i_n$ for $n\ge 1$, $N^i(t)=\sup\{n|S_n^i\le t\}$ and finally
\begin{equation}\label{eq:reg}
X_i(t)=X^i_{{N^i(t)+1}}\left(t-S^i_{N^i(t)}\right)
\end{equation}
and we observe that $X_i(\cdot)$ is a regenerative process which, due to (i) and (ii), converges in distribution.

The following is our main result. For its proof, it is practical to denote $\ti=(t_1,\ldots,t_m)$, $\mathbf{v}(t)=(v_1(t),\ldots,v_m(t))$, $\SSS_n=(S^1_n,\ldots,S^m_n)$, $\mathbf{T}=(T^1,\ldots,T^m)$ and $K=\prod_{i=1}^m\|f_i\|_\infty$.
		
		\begin{theorem}\label{thm:main result}
			Assume that $\mu_1\le\ldots\le\mu_m$ and let $v_1(\cdot),\ldots,v_m(\cdot)$ be non-negative (deterministic Borel) functions such that
			$v_m(t)\rightarrow\infty$ as $t\to\infty$ and for each $1\le i<m$,
				\begin{equation*}
				\liminf_{t\to\infty}\frac{v_i(t)}{v_{i+1}(t)}>\frac{\mu_i}{\mu_{i+1}}\,.
				\end{equation*}
	
			Then, $\left(X_1\left(v_1(t)\right),\ldots,X_m\left(v_m(t)\right)\right)$ converges (jointly) in distribution, as $t\to\infty$, to $\left(X_1(\infty),\ldots,X_m(\infty)\right)$ where $X_1(\infty),\ldots,X_m(\infty)$ are independent random variables, where for each $i$ and nonnegative Borel $g$ we necessarily have that
\begin{equation}
Eg(X_i(\infty))=\frac{1}{ET^i}E\int_0^{T^i}g(X^i(s))ds\,.
\end{equation}
		\end{theorem}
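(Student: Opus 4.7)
The plan is to verify that for all bounded continuous $f_1,\ldots,f_m$ on the respective state spaces,
\begin{equation*}
E\prod_{i=1}^m f_i\bigl(X_i(v_i(t))\bigr)\longrightarrow \prod_{i=1}^m \frac{1}{\mu_i}\,E\int_0^{T^i} f_i\bigl(X^i(s)\bigr)\,ds,
\end{equation*}
from which joint weak convergence to the product measure follows, the stated formula identifying the marginals. Using $X_i(v_i(t))=X^i_{K_i}(v_i(t)-S^i_{K_i-1})$ with $K_i=K_i(t):=N^i(v_i(t))+1$, I decompose the left-hand side as the sum over tuples $\mathbf{k}\in\mathbb{N}^m$ of the contribution on $\{K_i=k_i\,\forall i\}$.

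\emph{Separation of cycle indices.} By the SLLN for renewal processes, $K_i(t)/v_i(t)\to 1/\mu_i$ a.s.; combined with $\liminf v_i/v_{i+1}>\mu_i/\mu_{i+1}$ and $\mu_1\le\cdots\le\mu_m$, this gives $\liminf K_i/K_{i+1}>1$ a.s., hence $K_i-K_{i+1}\to\infty$ in probability. Choose deterministic integer sequences $\ell_1(t)>\cdots>\ell_{m-1}(t)$ with $\ell_i-\ell_{i+1}\to\infty$ and $P(K_{i+1}<\ell_i<K_i,\,\forall i<m)\to 1$. On this ``separation event'' the indices $K_1>K_2>\cdots>K_m$ lie in disjoint intervals, so the current-cycle vectors $V_{K_1},\ldots,V_{K_m}$ (writing $V_n:=(X^1_n,T^1_n,\ldots,X^m_n,T^m_n)$) correspond to distinct rows of the i.i.d.\ array and are therefore mutually independent. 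Conditioning on $\mathcal{T}:=\sigma(T^i_n:1\le i\le m,\,n\ge 1)$, the path-parts $X^i_{K_i}(\cdot)$ are then conditionally independent across $i$, so on the separation event
\begin{equation*}
E\Bigl[\prod_i f_i\bigl(X^i_{K_i}(v_i-S^i_{K_i-1})\bigr)\,\Big|\,\mathcal{T}\Bigr]=\prod_i \psi_i\bigl(v_i-S^i_{K_i-1};\,T^1_{K_i},\ldots,T^m_{K_i}\bigr),
\end{equation*}
with $\psi_i(s;\mathbf{t}):=E[f_i(X^i(s))\mid\mathbf{T}=\mathbf{t}]$.

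\emph{Removing residual dependence and applying key renewal.} Taking the outer expectation, the main obstacle is that $S^i_{K_i-1}$ involves $T^i_n$ for $n<K_i$, which include the terms $T^i_{K_j}$ for $j>i$; these sit inside $V_{K_j}$ and are correlated with the arguments $(T^1_{K_j},\ldots,T^m_{K_j})$ of $\psi_j$. I resolve this by coupling: for each pair $i<j$ (so $K_i>K_j$) replace the single term $T^i_{K_j}$ in $S^i_{K_i-1}$ by an independent copy of $T^i$. The resulting perturbation of $S^i_{K_i-1}$ is $O_P(1)$, and since $h_i(s):=E[f_i(X^i(s))\mathbf{1}_{s<T^i}]$ satisfies $|h_i|\le\|f_i\|_\infty P(T^i>s)$ with $\int_0^\infty P(T^i>s)\,ds=\mu_i<\infty$, $h_i$ is directly Riemann integrable, so a bounded shift in its argument is asymptotically negligible in the key-renewal limit. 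After the coupling, the arguments of the $\psi_i$'s are mutually independent across $i$, the joint sum over the disjoint index ranges factors into a product of one-dimensional renewal convolutions, and Smith's key renewal theorem applied to each factor yields $\prod_i \mu_i^{-1}\int_0^\infty h_i(s)\,ds=\prod_i E f_i(X_i(\infty))$, as required.
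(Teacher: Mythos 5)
Your overall strategy---separate the current-cycle indices, then factor and apply the key renewal theorem coordinatewise---captures the right intuition, but the step where the factorization actually happens is asserted rather than proved, and two of the supporting claims are false as stated. First, ``distinct rows of the i.i.d.\ array are therefore mutually independent'' does not hold for randomly chosen rows: $K_i=N^i(v_i(t))+1$ is determined by the array itself, so $V_{K_i}$ is not even distributed like $V_1$ (it is length-biased in the $T^i$ coordinate), and distinctness of the random indices $K_1,\dots,K_m$ gives no independence of $V_{K_1},\dots,V_{K_m}$. Second, and more seriously, the dependence you propose to remove by coupling is only a small part of the dependence actually present: for $i<j$ the partial sums $S^i_{K_i-1}=\sum_{n<K_i}T^i_n$ and $S^j_{K_j-1}=\sum_{n<K_j}T^j_n$ share \emph{all} rows $n<K_j$, and within each such row $T^i_n$ and $T^j_n$ may be arbitrarily dependent (that is the whole point of the theorem); replacing the single terms $T^i_{K_j}$ by independent copies leaves this dependence intact, so after your coupling the arguments of the $\psi_i$'s are still dependent across $i$. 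Consequently the expectation of the product over tuples $\mathbf{k}$ in disjoint index ranges does not split into a product of one-dimensional renewal convolutions; the sentence ``the joint sum over the disjoint index ranges factors into a product of one-dimensional renewal convolutions'' is precisely the multivariate key renewal statement that constitutes the theorem, so invoking it here is circular.

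The paper never attempts to decouple the cycle indices directly. It sets up a multivariate renewal equation for $A_m(\mathbf{t})=E\prod_i f_i(X_i(t_i))$ by partitioning according to which of the first cycles are still running, shows that every term except the one where only the $m$th first cycle survives is bounded by $K\,P(N^i(v_i(t))\le N^j(v_j(t)))\to 0$ for some $i<j$ (this is where the rate conditions enter), and identifies the surviving term as $E\,A_{m-1}$ evaluated at the randomly shifted times $v_i(t)-S^i_{N^m(v_m(t))+1}$; it then verifies a.s.\ that these shifted times satisfy the hypotheses of the $(m-1)$-dimensional statement, and concludes by induction on $m$ together with Slutsky's theorem. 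In effect, the induction is what proves the multivariate key renewal property for the dependent renewal vector $(\mathbf{S}_n)$; your argument would need an independent proof of that property to close the gap.
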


	\begin{proof}
		Let $f_1,\ldots,f_m$ be bounded and continuous functions. It suffices to show that
		\begin{equation}\label{eq:AsympInd}
		\exists\lim_{t\to\infty}E\prod_{i=1}^mf_i\left(X_i\left(v_i(t)\right)\right)=\prod_{i=1}^mEf_i\left(X_i(\infty)\right)\,.
		\end{equation}
		This will be done by induction. We assume without loss of generality that $\mu_1=\ldots=\mu_m=1$. Otherwise we can set $\tilde X^n_i(t)=X^n_i(\mu_i t)$, $\tilde T_n^i=T_n^i/\mu_i$ and $\tilde v_i(t)=v_i(t)/\mu_i$.

As mentioned below \eq{reg}, \eq{AsympInd} holds for $m=1$. When $m\ge 2$, assume that \eq{AsympInd} holds for $m-1$. For each $J\subset M_m$ and $\ti\ge 0$ denote the event
 \begin{equation}
 C_J(\ti)=\left(\bigcap_{i\in J}\{T_1^i>t_i\}\right)\cap\left(\bigcap_{i\not\in J}\{T_1^i\le t_i\}\right)
 \end{equation}
and observe that $\{C_J(\ti)|\,J\subset M_m\}$ are pairwise disjoint and their union is the entire sample space.	 For each $m\ge 1$ and $\ti\ge 0$ let
\begin{equation}
A_m(\ti)=E\prod_{i=1}^mf_i\left(X_i(t_i)\right)
\end{equation}
and for each $J\subset M_m$
\begin{equation}
a_J(\ti)=E\left(\prod_{i=1}^mf_i\left(X_i(t_i)\right)\right)1_{C_J(\ti)}\,.
\end{equation}
Clearly,
\begin{equation}
A_m(\ti)=\sum_{J\subset M_m}a_J(\ti)\ .
\end{equation}
By a standard renewal argument
\begin{equation}
a_{\phi}(\ti)=EA\left(\ti-\mathbf{T}\right)1_{\{\mathbf{T}\le\ti\}}
\end{equation}
thus, we obtain a multivariate renewal equation for $A_m(\cdot)$ of the form
\begin{equation}
A_m(\ti)=\sum_{\phi\not=J\subset M_m}a_J(\ti)
+EA\left(\ti-\mathbf{T}\right)1_{\{\mathbf{T}\le\ti\}}
\end{equation}
and by identical arguments as for the univariate case its unique solution is
\begin{align}
A_m(\ti)&=\sum_{n=0}^\infty\left(\sum_{\phi\not=J\subset M_m} a_J(\ti-\SSS_n)\right)1_{\{\SSS_n\le \ti\}}\nonumber\\
&=\sum_{\phi\not=J\subset M_m} \left(\sum_{n=0}^\infty a_J(\ti-\SSS_n)1_{\{\SSS_n\le \ti\}}\right)\ ,
\end{align}
in particular if we replace $\ti$ by $\mathbf{v}(t)$.

First we will show that for all $J\not=\phi,\{m\}$,
\begin{equation}\label{eq:vanish}
\lim_{t\to\infty}\sum_{n=0}^\infty a_J(\mathbf{v}(t)-\SSS_n)1_{\{\SSS_n\le\mathbf{v}(t)\}}= 0\,,
\end{equation}
after which it will be left to consider only $J=\{m\}$ where the induction step will be carried out.

Observe that for each $J\not=\phi,\{m\}$ either $i,j\in J$ for some $i<j$ or $i\in J$ for exactly one $i<m$. In the former case we have that, for all $\ti\ge0$,
\begin{equation}
a_J(\ti)\le KP(T^i>t_i,T^j>t_j)\ .
\end{equation}
This together with the assumptions between \eq{1n} and \eq{10} imply that, for each $n\ge 0$,
\begin{align}\label{eq:bound1}
Ea_J(\ti-\SSS_n)1_{\{\SSS_n\le \ti\}}&\le KP(T^i>t_i-S_n^i,T^j>t_j-S_n^j,S_n^i\le t_i,S_n^j\le t_j)\nonumber\\
&=KP(T^i_{n+1}>t_i-S_n^i,T^j_{n+1}>t_j-S_n^j,S_n^i\le t_i,S_n^j\le t_j)\nonumber\\
&=KP(S_n^i\le t_i<S_{n+1}^i,S_n^j\le t_j<S_{n+1}^j)\\
&=KP(N^i(t_1)=N^j(t_j)=n)\nonumber
\end{align}
and upon summing with respect to $n\ge 0$ we have that
\begin{equation}\label{eq:bound2}
\sum_{n=0}^\infty a_J(\ti-\SSS_n)1_{\{\SSS_n\le \ti\}}\le KP(N^i(t_i)=N^j(t_j))\ .
\end{equation}
For the case where $i\in J$ for exactly one $i<m$ we have that for all $\ti\ge 0$,
\begin{equation}
a_J(\ti)\le KP(T^i>t_i,T^m\le t_m)\,,
\end{equation}
and repeating the ideas in \eq{bound1} and \eq{bound2} gives that for this case
\begin{equation}
\sum_{n=0}^\infty a_J(\ti-\SSS_n)1_{\{\SSS_n\le \ti\}}\le KP(N^i(t_i)<N^m(t_j))\ .
\end{equation}
Thus, it follows that for each $J\not=\phi,\{m\}$, for some $i<j$ we have that
\begin{equation}
\sum_{n=0}^\infty a_J(\mathbf{v}(t)-\SSS_n)1_{\{\SSS_n\le \ti\}}\le KP(N^i(v_i(t))\le N^j(v_j(t)))\ .
\end{equation}
Now,
\begin{equation}\label{eq:gt0}
\frac{N^i(v_i(t))-N^j(v_j(t))}{v_j(t)}=\frac{v_i(t)}{v_j(t)}\frac{N^i(v_i(t))}{v_i(t)}-\frac{N^j(v_j(t))}{v_j(t)}
\end{equation}
where, since
\[
\liminf_{t\to\infty} \frac{v_i(t)}{v_j(t)}\ge \prod_{k=i}^{j-1}\liminf_{t\to\infty}\frac{v_k(t)}{v_{k+1}(t)}>1\,
\]
the $\liminf$ of the right hand side of \eq{gt0} is strictly positive. This implies that $N^i(v_i(t))-N^j(v_j(t))\to\infty$ as $t\to\infty$ a.s. and thus $P(N^i(v_i(t))\le N^j(v_j(t)))$ vanishes as $t\to\infty$, which in turn implies \eq{vanish} for all $J\not=\phi,\{m\}$.

It therefore remains to look at $a_{\{m\}}(\ti)$. For this case regenerative arguments lead to
\begin{align}
a_{\{m\}}(\ti)&=EA_{m-1}(t_1-T^1,\ldots,t_{m-1}-T^{m-1})f_m(X^m(t_m))1_{C_{\{m\}}}(\ti)\nonumber \\
&=EA_{m-1}(t_1-T^1,\ldots,t_{m-1}-T^{m-1})1_{\{T^1\le t_1,\ldots,T^{m-1}\le t_{m-1}\}}\\
&\qquad \cdot f_m(X^m(t_m))1_{\{T^m>t_m\}}\ .
\end{align}
and once again, the same ideas as for \eq{bound1}, that is, in this case replacing $(T^1,\ldots,T^m,X^m)$ by $(T^1_{n+1},\ldots,T^m_{n+1},X^m_{n+1})$, lead to
\begin{align}
&Ea_{\{m\}}(\ti-\SSS_n)1_{\{\SSS_n\le \ti\}}\\
&=EA_{m-1}\left(t_1-S^1_{n+1},\ldots,t_{m-1}-S^{m-1}_{n+1}\right)1_{\left\{S_{n+1}^1\le t_1,\ldots,S_{n+1}^{m-1}\le t_{m-1}\right\}}\nonumber\\
&\qquad \cdot f\left(X_{n+1}^m\left(t_m-S^m_n\right)\right)1_{\left\{S^m_n\le t_m<S_{n+1}^m\right\}}\nonumber\\
&=EA_{m-1}\left(t_1-S^1_{N^m(t_m)+1},\ldots,t_{m-1}-S^{m-1}_{N^m(t_m)+1}\right)\nonumber\\
&\qquad \cdot 1_{\left\{S_{N^m(t_m)+1}^1\le t_1,\ldots,S_{N^m(t_m)+1}^{m-1}\le t_{m-1}\right\}} f(X_m(t_m))1_{\{N^m(t_m)=n\}}\ .\nonumber
\end{align}
Summing over $n\ge 0$ gives
\begin{align}
&\sum_{n=0}^\infty Ea_{\{m\}}(\ti-\SSS_n)1_{\{\SSS_n\le \ti\}}\\
&=EA_{m-1}\left(t_1-S^1_{N^m(t_m)+1},\ldots,t_{m-1}-S^{m-1}_{N^m(t_m)+1}\right)\nonumber\\
&\qquad \cdot 1_{\left\{S_{N^m(t_m)+1}^1\le t_1,\ldots,S_{N^m(t_m)+1}^{m-1}\le t_{m-1}\right\}} f(X_m(t_m))\ .\nonumber
\end{align}
Next, we need to substitute $t_i=v_i(t)$ and verify the induction step.
If we show that
\begin{align}\label{eq:Am1}
&A_{m-1}\left(v_1(t)-S^1_{N^m(v_m(t))+1},\ldots,v_{m-1}(t)-S^{m-1}_{N^m(v_m(t))+1}\right)\\
&\qquad \cdot 1_{\left\{S_{N^m(v_m(t))+1}^1\le v_1(t),\ldots,S_{N^m(t_m)+1}^{m-1}\le v_{m-1}(t)\right\}}\ \nonumber
\end{align}
converges a.s. to $\prod_{i=1}^{m-1}Ef_i(X_i(\infty))$, then since
$X_m(v_m(t))$ converges in distribution to $X_m(\infty)$ as $t\to\infty$, the result will follow from Slutsky's Theorem.

For $i<m$ we first observe that
\begin{align}
I^i(t)\equiv\frac{S^i_{N^m(v_m(t))+1}}{v_m(t)}=\frac{N^m(v_m(t))+1}{v_m(t)}\frac{S_{N^m(v_m(t))+1}^i}{N^m(v_m(t))+1}\stackrel{\text{a.s.}}{\to}1\,. \end{align}
Thus,
\begin{equation}
\liminf_{t\to\infty}\frac{v_i(t)-v_m(t)I^i(t)}{v_m(t)}=\liminf_{t\to\infty}\frac{v_i(t)}{v_m(t)}-\lim_{t\to\infty}I^i(t)>1-1=0\ .
\end{equation}
This in turn implies that $v_i(t)-S_{N^m(v_m(t))+1}^i=v_i(t)-v_m(t)I^i(t)\stackrel{\text{a.s.}}{\to}\infty$, hence,
\begin{equation}
1_{\{S_{N^m(v_m(t))+1}^1\le v_1(t),\ldots,S_{N^m(t_m)+1}^{m-1}\le v_{m-1}(t)\}}\stackrel{\text{a.s.}}{\to}1\ .
\end{equation}
It remains to show that for $1\le i<m-2$ we have that a.s.
\begin{equation}
\liminf_{t\to\infty}\frac{v_i(t)-v_m(t)I^i(t)}{v_{i+1}(t)-v_m(t)I^{i+1}(t)}>1\
\end{equation}
in which case we can apply the induction hypothesis and thus complete the proof. Equivalently it would suffice to show that
\begin{equation}
\liminf_{t\to\infty}\frac{\frac{v_i(t)}{v_{i+1}(t)}-1-\frac{v_m(t)}{v_{i+1}(t)}(I^i(t)-I^{i+1}(t))}{1-\frac{v_m(t)}{v_{i+1}(t)}I^{i+1}(t)}>0\ .
\end{equation}
Since $\limsup_{t\to\infty}(v_m(t)/v_{i+1}(t))<1$ it follows that $\frac{v_m(t)}{v_{i+1}(t)}(I^i(t)-I^{i+1}(t))$ vanishes a.s. Clearly we also have that  $\liminf_{t\to\infty}(v_m(t)/v_{i+1}(t))<1$ so that finally
\begin{equation}
\liminf_{t\to\infty}\frac{\frac{v_i(t)}{v_{i+1}(t)}-1}{1-\frac{v_m(t)}{v_{i+1}(t)}I^{i+1}(t)}\ge
\frac{\liminf_{t\to\infty}\frac{v_i(t)}{v_{i+1}(t)}-1}{1-\liminf_{t\to\infty}\frac{v_m(t)}{v_{i+1}(t)}}>0
\end{equation}
and the proof is complete.
	\end{proof}
	
	The following corollaries are immediate consequences of Theorem~\ref{thm:main result}.
	\begin{corollary}\label{cor:multipicate}
		If $X(\cdot)$ is a regenerative process whose regeneration time has a non-arithmetic distribution with finite and positive mean, then for every positive $\alpha_i\not=\alpha_j$ for $i\not=j$ and $\beta_1,\ldots,\beta_m\in\mathbb{R}$, \[X(\alpha_1t+\beta_1),\ldots,X(\alpha_mt+\beta_m)\] are asymptotically independent and identically distributed as $t\to\infty$.
	\end{corollary}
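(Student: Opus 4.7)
The plan is to deduce the corollary directly from Theorem~\ref{thm:main result} by taking the $m$ ``processes'' in the theorem to be perfectly coupled copies of the single regenerative process $X(\cdot)$. Concretely, let $(X_n(\cdot),T_n)$ for $n\ge 1$ be the i.i.d.\ cycles underlying $X(\cdot)$, and set $(X_n^i(\cdot),T_n^i)=(X_n(\cdot),T_n)$ for every $i\in M_m$ and every $n$. Then the joint distribution of $(X^1,T^1),\ldots,(X^m,T^m)$ is the degenerate one on the diagonal, and in particular $S_n^i=S_n$ and $X_i(s)=X(s)$ for all $i$ and all $s\ge 0$; this is an allowed instance of the setup of \eq{1n}--\eq{10}.

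Next I would handle the indexing and the time-change functions. Since the $\alpha_i$ are distinct and positive, after relabelling we may assume $\alpha_1>\alpha_2>\cdots>\alpha_m>0$; such a permutation neither affects asymptotic independence nor the claim that all marginals coincide. Define
\begin{equation*}
v_i(t)=\max(\alpha_it+\beta_i,0)\,.
\end{equation*}
Each $v_i$ is non-negative and Borel, $v_m(t)\to\infty$, and for large enough $t$ we have $v_i(t)=\alpha_it+\beta_i$, so
\begin{equation*}
\lim_{t\to\infty}\frac{v_i(t)}{v_{i+1}(t)}=\frac{\alpha_i}{\alpha_{i+1}}>1=\frac{\mu_i}{\mu_{i+1}}
\end{equation*}
(the common mean being $\mu=ET$). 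Hence the hypotheses of Theorem~\ref{thm:main result} are satisfied.

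Applying the theorem yields that $\bigl(X_1(v_1(t)),\ldots,X_m(v_m(t))\bigr)$ converges jointly in distribution to $\bigl(X_1(\infty),\ldots,X_m(\infty)\bigr)$ with independent components. Since $X_i(\cdot)=X(\cdot)$ for every $i$, and for large $t$ the inputs $v_i(t)$ coincide with $\alpha_it+\beta_i$, the limiting vector is exactly the distributional limit of $\bigl(X(\alpha_1t+\beta_1),\ldots,X(\alpha_mt+\beta_m)\bigr)$. Moreover, the formula
\begin{equation*}
Eg(X_i(\infty))=\frac{1}{ET}E\int_0^{T}g(X(s))ds
\end{equation*}
depends on $i$ only through the common cycle $(X,T)$, so the marginals are identical. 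That delivers both asymptotic independence and identical distribution.

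There is no real obstacle; the only mild subtleties are the sign of $\alpha_it+\beta_i$ at small $t$, dealt with by truncating at $0$ as above (which does not affect the $t\to\infty$ limit), and the fact that the theorem requires $\mu_1\le\cdots\le\mu_m$, which is automatic here since all means coincide and any permutation of the indices is admissible.
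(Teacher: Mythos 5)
Your proposal is correct and is exactly the intended reading of the paper's claim that the corollary is an ``immediate consequence'' of Theorem~\ref{thm:main result}: instantiate the theorem with the diagonal (perfectly coupled) choice $(X_n^i(\cdot),T_n^i)=(X_n(\cdot),T_n)$, order the indices so that $\alpha_1>\cdots>\alpha_m$, and take $v_i(t)=\max(\alpha_it+\beta_i,0)$, so that $\liminf_{t\to\infty}v_i(t)/v_{i+1}(t)=\alpha_i/\alpha_{i+1}>1=\mu_i/\mu_{i+1}$. The paper supplies no further argument, and your handling of the relabelling, the truncation at $0$, and the identical marginals fills in precisely the details the authors left implicit.
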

	\begin{corollary}\label{cor:shifts}
		If $\mu_i\not=\mu_j$ for all $i\not=j$, then for every $\beta_1,\ldots,\beta_m\in\mathbb{R}$, \[X_1(t+\beta_1),\ldots,X_m(t+\beta_m)\] are asymptotically independent as $t\to\infty$.
	\end{corollary}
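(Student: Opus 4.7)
The plan is to obtain Corollary~\ref{cor:shifts} as an immediate application of Theorem~\ref{thm:main result} with the choice $v_i(t)=t+\beta_i$ (with a harmless truncation at $0$). Since asymptotic independence is invariant under permutations of the coordinates, I may relabel the processes so that $\mu_1<\mu_2<\cdots<\mu_m$, where the strict inequalities use the hypothesis that the means are pairwise distinct.

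First I would set $\tilde v_i(t)=(t+\beta_i)^+$, which equals $t+\beta_i$ for all sufficiently large $t$ and is non-negative Borel, so the regenerative time-change $X_i(\tilde v_i(t))$ coincides with $X_i(t+\beta_i)$ eventually and hence has the same limit in distribution. Then I would check the two assumptions of Theorem~\ref{thm:main result}: clearly $\tilde v_m(t)=t+\beta_m\to\infty$, and for each $1\le i<m$,
\begin{equation*}
\lim_{t\to\infty}\frac{\tilde v_i(t)}{\tilde v_{i+1}(t)}=\lim_{t\to\infty}\frac{t+\beta_i}{t+\beta_{i+1}}=1>\frac{\mu_i}{\mu_{i+1}},
\end{equation*}
where the final strict inequality follows because $\mu_i<\mu_{i+1}$. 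Hence the hypothesis
$\liminf_{t\to\infty}\tilde v_i(t)/\tilde v_{i+1}(t)>\mu_i/\mu_{i+1}$ is satisfied.

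Applying Theorem~\ref{thm:main result} gives that $(X_1(\tilde v_1(t)),\ldots,X_m(\tilde v_m(t)))$ converges jointly to $(X_1(\infty),\ldots,X_m(\infty))$ with independent coordinates, and since $X_i(t+\beta_i)=X_i(\tilde v_i(t))$ for all $t$ large enough, the same limit applies to $(X_1(t+\beta_1),\ldots,X_m(t+\beta_m))$. There is no real obstacle here; the only point requiring a moment of care is verifying that the non-negativity requirement on the $v_i$ can be met by a benign eventual truncation, but this does not affect the limit. The rest is simply a matter of reading off the hypotheses of the theorem under the substitution $v_i(t)=t+\beta_i$.
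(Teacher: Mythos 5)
Your proposal is correct and matches the paper's intent: the paper gives no explicit proof, calling the corollary an immediate consequence of Theorem~\ref{thm:main result}, and your argument is exactly the obvious instantiation $v_i(t)=t+\beta_i$ after reordering so that the (pairwise distinct) means are strictly increasing, which makes $\liminf_t v_i(t)/v_{i+1}(t)=1>\mu_i/\mu_{i+1}$. The remarks about the harmless truncation at $0$ and the permutation-invariance of asymptotic independence are the right small details to note.
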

	
Finally we remark that one important special case is
\begin{equation}
X_i(t)=(\beta_i(t),\gamma_i(t))\equiv\left(t-S^i_{N^i(t)},S^i_{N^i(t)+1}-t\right)\,,
\end{equation}
that is, the joint age and residual lifetime process associated with $N^i(\cdot)$, where $\{(T^1_n,\ldots,T^m_n)|n\ge 1\}$ are i.i.d. (with an arbitrary joint distribution). Denoting $(\beta_i,\gamma_i)$ a random vector having the limiting distribution of $(\beta_i(t),\gamma_i(t))$, provided of course that $0<\mu_i<\infty$ and that $T^i$ has a nonarithmetic distribution, and also denote $\alpha_i=\beta_i+\gamma_i$ and let $U_i\sim\text{Uniform}(0,1)$ be an independent random variable, then it is well known (and will be needed a bit later) that
\begin{align}
P(\beta_i\in dx)=P(\gamma_i\in dx)=F_e^i(dx)&\equiv\mu_i^{-1}P(T^i>x)dx\nonumber\\
P(\alpha_i\in dx)&=\mu_i^{-1}xP(T^i\in dx)\\
P(\beta_i>x,\gamma_i>y)&=P(\gamma_i>x+y)\nonumber\\
(\beta_i,\gamma_i)&\sim (U_i\alpha_i,(1-U_i)\alpha_i)\nonumber
\end{align}

	\section{The four models revisited}\label{sec:applications}
Let us see how to apply Theorem~\ref{thm:main result} to each of the three models considered in the Introduction.

\subsection{L\'evy driven queues with secondary jump inputs}
Recalling the notation in Subsection~\ref{sec:queues} and denoting $T^i_n=S^i_n-S^i_{n-1}$, we can directly apply the result provided that we know that $ET^i_1$ are all finite, are different from one another and moreover have a nonarithmetic distribution. In that case the limiting distribution exists and is of product form with marginals given in, e.g., \cite{kw91,kw92}. The only way for $T^1_1$ to have an arithmetic distribution is if for some $d>0$, $X_i(\cdot)$ is a compound Poisson process with negative drift and jump distribution concentrated on $\{nd|\,n\ge 0\}$ and the distribution of $U^i_1$ is also concentrate on $\{nd|\,n\ge 0\}$. In all other cases the distribution of $T^1_i$ is nonarithemtic. In particular, when (but not only when) the distribution of $U^i_1$ has a nonarithmetic distribution or when $X_i(\cdot)$ is either not a compound Poisson process with drift or if it is and the jump sizes have a nonarithmetic distribution. It is well known (e.g., \cite{kw91,kw92}) that $ET^i_1=(-X^i_1(1))EU^i_1$, thus in order to apply our result we need that $EX_i(1)>-\infty$ (it can never be $+\infty$), that $EU^i_1<\infty$ and that the quantities $(-X^i_1(1))EU^i_1$ are different from one another.

\subsection{L\'evy driven clearing processes}
Recalling the notation in Subsection~\ref{sec:clearing} we only need that $T^i_1$ have nonarithmetic distributions as well as finite and different means. Then the limiting distribution is of product form with well known marginals (e.g., \cite{k98}).

\subsection{Real-time status updating}
	To start with, the following provides an exact definition of an updated system:
	
	\begin{definition}
		The system is updated at time $t>0$ if
		
		\begin{equation*}
		\beta_i(t)>Y^i_{N^i(t)}/c_i \ \ , \ \ \forall i=1,\ldots,m
		\end{equation*}
		where $\beta_i(\cdot)$ and $N^i(\cdot)$ are the age and counting processes which are associated with the generation process of updates by the $i$th source.
	\end{definition}
	Notice that since the purpose is identifying the system's asymptotic behaviour, it does not matter whether the system is assumed to be initially updated or not. Now, Using  Corollary \ref{cor:shifts} with respect to $X_i(\cdot)=\beta_i(\cdot)$ for every $1\le i\le m$, since $Y_j^i,(X_n^1,\ldots,X_n^m);\,n\ge 1,j\ge 1,1\le i\le m$ are all independent, it is straightforward that
	
	\begin{equation}\label{eq:pi}
	\pi=\prod_{i=1}^mE\bar{F}^i_e\bigg(\frac{Y^i}{c_i}\bigg)
	\end{equation}
	where $\bar{F}_e^i\equiv1-F_e^i,\forall i\in M_m$.
	
Finally, we note in passing that (\ref{eq:pi}) leads to a family of optimization problems of allocating positive capacities $c_i$ subject to various constraints. Although for each $i\in M_m$, $\bar F^i_e(\cdot)$ is convex (since $F^i_e(\cdot)$ has a nonincreasing density), which initially gives some hope, the product appearing in (\ref{eq:pi}) and the fact that $c_i$ appears in the denominator implies that as a function of $(c_1,\ldots,c_m)$ the right hand side is neither convex nor concave and thus in general the problem is not necessarily a simple one. As this is not the scope of this paper, we will not elaborate on this any further.

\subsection{Jacson networks}
Under the standard conditions, the joint process is regenerative, where the epochs when the network becomes empty are the regeneration epochs. These are known to have a finite mean and a distribution that has a density and is thus nonarithmetic.
Given Theorem~\ref{thm:main result} and in particular Corollary~\ref{cor:multipicate}, the answer to both questions raised in Subsection~\ref{subsec:Jackson} is clearly yes. In fact, due to the original product form result for Jackson networks, when $\beta_i=0$ for all $i\in M_m$, this is true even if we do not assume that the $\alpha_i$'s are different. When they {\em are} different, if we look at the entire ($m$-dimensional) network status at the time points $\alpha_it+\beta_i$ for $i=1,\ldots,k$ where $k$ is not necessarily equal to $m$, we will get a limit in distribution which is actually a product form of $mk$ distributions.

\begin{remark}
We conclude by admitting that there are clearly cases in which product form emerges when the assumptions of Theorem~\ref{thm:main result} do not hold. For example, product form in certain queueing networks, product form in certain reflected Brownian motions, the case where $X_1(\cdot),\ldots,X_m(\cdot)$ are independent, the simultaneous reduction model of \cite{mk01}, etc.
\end{remark}

	\end{document}